\newtheorem{definition}{Definition}
\newtheorem{theorem}{Theorem}
\newtheorem{proposition}{Proposition}
\newtheorem{corollary}{Corollary}
\newtheorem{remark}{Remark}
\def\de{\delta}
\def\al{\alpha}
\def\La{\Lambda}
\def\la{\lambda}
\def\kappa{\varkappa}
\def\si{\sigma}
\def\C{{\mathbb C}}
\def\sn{{\mathfrak S}_n}
\def\sk{{\mathfrak S}_k}
\def\sinf{{\mathfrak S}_\infty}
\def\H{{\cal H}}
\def\K{{\cal K}}
\def\M{{\cal M}}
\def\Msym{{\cal M}^{\rm sym}}
\def\Mskew{{\cal M}^{\rm skew}}
\def\Toff{{\cal T}^{\rm off}}
\def\Poff{{\cal P}^{\rm off}}
\def\N{{\mathbb N}}
\def\beq{\begin{equation}}
\def\eeq{\end{equation}}
\def\bea{\begin{eqnarray*}}
\def\eea{\end{eqnarray*}}
\def\ch{\operatorname{ch}}
\def\Id{\operatorname{Id}}
\def\Ind{\operatorname{Ind}}
\def\Reg{\operatorname{Reg}}
\def\sgn{\operatorname{sgn}}
\vershik\url{vershik@pdmi.ras.ru}
\natalia\url{natalia@pdmi.ras.ru}
\pnikitin\url{pnikitin@pdmi.ras.ru}
\author{P.~P.~Nikitin\thanks{%
St.~Petersburg Department of Steklov Institute of Mathematics.
E-mail: \pnikitin.} \and
N.~V.~Tsilevich\thanks{%
St.~Petersburg Department of Steklov Institute of Mathematics and
St.~Petersburg State University.
E-mail: \natalia.}
\and A.~M.~Vershik\thanks{%
St.~Petersburg Department of Steklov Institute of Mathematics, 
St.~Petersburg State University, and Institute for Information Transmission Problems.
E-mail: \vershik.}}
\title{On the decomposition of tensor representations of symmetric groups\thanks{Supported by the RFBR grant 17-01-00433 (Sections~1--3, 5) and the RSF grant 17-71-20153 (Section~4, the infinite-dimensional case).}}
\date{}
\begin{document}
\maketitle

\begin{abstract}
Following the general idea of Schur--Weyl scheme and 
using two suitable symmetric groups (instead of one),
we try to make more explicit the classical problem of decomposing tensor representations of finite and infinite symmetric groups into irreducible components.
\end{abstract}

\section{Introduction}

The problem of decomposing representations of symmetric and other classical groups in spaces of tensors into irreducibles has a long history and is of great importance for applications. Nevertheless, to the authors' opinion, this topic, being entirely classical, is not sufficiently addressed  in textbooks, both old and new (see, e.g., \cite{Mur, Weyl, FH, Cech}).  

Our approach is as follows: extending the classical Schur--Weyl duality between the actions of the general linear group $\operatorname{GL}(n,\C)$ and the symmetric group $\sk$ in the space
 $(\Bbb C^n)^{\otimes k}$, we consider the relationship between representations of two symmetric groups, $\sk$ itself and the Weyl subgroup $\sn$ in
 $\operatorname{GL}(n,\C)$. This leads us to the introduction of the so-called \emph{decomposition tensor of tensor representations}.
    
In more detail,  we study the ``Schur--Weyl'' representation of the group $\sk$ in the space $\H=(\Bbb C^n)^{\otimes k}$ together with the commuting representation of the group $\sn\subset\operatorname{GL}(n,\C)$. Then the space $\H$ can be decomposed into $(\sn\otimes\sk)$-invariant subspaces in two different ways. 
The first decomposition, indexed by Young diagrams $\nu\vdash k$, is into the subspaces $\H_\nu$ of tensors of different symmetry types (signatures), e.g., symmetric or skew-symmetric. The second one, indexed by Young diagrams $\mu\vdash k$, is into the subspaces $\H_\mu$ of tensors with the same multiplicities of the multisets of indices. Thus we can consider the representations $\rho_{\mu,\nu}$ of $\sn$ in the intersections $\H_{\mu,\nu}$ of these subspaces,  and the main result of the paper, Theorem~\ref{th:main}, gives a formula for this representation.

Considering the multiplicities $a_{\mu,\nu}^\eta$, $\eta\vdash n$, $\mu,\nu\vdash k$, of irreducible representations $\pi_\eta$ of $\sn$  in this representation, it is easy to see that they are nonzero only for diagrams with at most $k$ cells in all rows except the first one. Moreover, denoting by $(n-|\la|,\la)$ the diagram with $n$ cells obtained from $\la\vdash l\le k$ by adding a row of length $n-|\la|$,
  the multiplicities $a_{\mu\nu}^{(n-|\la|,\la)}$ does not depend on $n$ for sufficiently large $n$ (and fixed $k$).
These stable values of coefficients determine what we have called the \emph{decomposition tensor of tensor representations}.\footnote{The authors have not been able to find this object in the literature, though it is very natural and even indispensable for the representation theory of symmetric groups. We also emphasize that the decomposition tensor under consideration is not the structure tensor of any algebra.} It is upper triangular: $\rho_{\mu,\nu}$ is nonzero if and only if $\nu\unrhd\mu$ in the sense of the natural (dominance) ordering on partitions.

The actual computation of the components of the decomposition tensor, i.e., the multiplicities $a_{\mu,\nu}^\eta$, is a difficult problem which hardly has a good (closed-form) answer, since it involves the computation of plethysm coefficients, which is well known to be a very hard problem. However, it is of interest to analyze the decomposition tensor and the representations $\rho_{\mu,\nu}$ for small dimensions, and we do this for $k\le4$.
 
The stability property mentioned above suggests to study the similar problem for tensor representations of the infinite symmetric group $\sinf$. The answer (Theorem~\ref{th:inf}) is also similar to that in the finite case, but, as often happens, the infinite case is simpler than the finite one, since the involved induced representations become irreducible. In fact, by the well-known Lieberman theorem, these are exactly the representations of $\sinf$ extendable to representations of the complete symmetric group ${\frak S}^\infty$.  We consider in more detail the case $\mu=(1^k)$ of purely off-diagonal tensors, which includes all these representations.

Finally, in the last section we give an interpretation of our results in terms of symmetric functions. Namely, we present an identity for symmetric functions that corresponds to the decomposition of the representation of $\sn$ in the Schur--Weyl space $(\Bbb C^n)^{\otimes k}$ into the sum $\oplus_{\mu,\nu\vdash k}\rho_{\mu,\nu}$ of representations  in the subspaces $\H_{\mu,\nu}$.

\section{The decomposition tensor}\label{sec:1}
For $n,k\in\N$, consider the Schur--Weyl space
$$
\H=(\C^n)^{\otimes k},
$$
and denote its natural basis by $e_{i_1\ldots i_k}=e_{i_1}\otimes\ldots\otimes e_{i_k}$, where $\{e_i\}_{i=1,\ldots,n}$ is the basis of $\C^n$. Given a partition $\la$ of a positive integer, we denote by $\pi_\la$ the corresponding irreducible representation of a symmetric group. For convenience, we denote by $\Id_i=\pi_{(i)}$ the identical representation of ${\mathfrak S}_i$.

In $\H$ we have commuting actions of $\sn$ and $\sk$. Namely, $\sn$ acts as a subgroup of $\operatorname{GL}(n,\C)$:
$$
g(e_{i_1}\otimes\ldots\otimes e_{i_k})=e_{g(i_1)}\otimes\ldots\otimes e_{g(i_k)},\qquad g\in\sn,
$$
and $\sk$ acts by permutations of factors:
$$
\si(e_{i_1}\otimes\ldots\otimes e_{i_k})=e_{i_{\si(1)}}\otimes\ldots\otimes e_{i_{\si(k)}},\qquad\si\in\sk.
$$

On the one hand, we have the natural Schur--Weyl  ``{\it symmetry type}'' decomposition into $(\sn\otimes\sk)$-invariant subspaces
\begin{equation}\label{eq:H_decomp_form}
\H=\sum_{\nu\vdash k}\H^{\rm SW}_\nu,
\end{equation}
where $\H^{\rm SW}_\nu$ is the isotypic component of the irreducible representation $\varrho_\nu$ of $\operatorname{GL}(n,\C)$ with signature $\nu$. Denoting by $\xi_\nu$ the representation of $\sn$ obtained by restricting $\varrho_\nu$ to $\sn\subset \operatorname{GL}(n,\C)$, we have that the representation of $\sn$ in $\H^{\rm SW}_\nu$ is isomorphic to $\dim\nu\cdot\xi_\nu$. A (not quite explicit) formula for the characteristics of $\xi_\nu$ is given in \cite{SchThib} (see also \cite[Ex.~7.74]{St}).

On the other hand, we have the ``{\it type of tensors}'' decomposition into $(\sn\otimes\sk)$-invariant subspaces
\beq\label{decomp2}
\H=\sum_{\mu\vdash k}\H^{\rm mult}_\mu,
\eeq
where $\H^{\rm mult}_\mu$ is the subspace spanned by all $e_{i_1\ldots i_k}$ such that the multiset of indices $\{i_1,\ldots,i_n\}$ is of type $\mu=(1^{m_1}2^{m_2}\ldots)$, i.\,e., has $m_j$ elements of multiplicity $j$ for $j=1,2,\ldots$. It is not difficult to see that the representation of $\sn$ in $\H^{\rm mult}_\mu$ is isomorphic to
\beq\label{formsrep}
\frac{k!}{\prod_i(i!)^{m_i}m_i!}\cdot \Ind_{{\mathfrak S}_l\times{\mathfrak S}_{n-l}}^{\sn}(\Reg_l\times\Id_{n-l}),
\eeq
where $l=\sum m_i=l(\mu)$ is the length of $\mu$ and $\Reg_l$ is the regular representation of~${\mathfrak S}_l$.

Thus we also have the ``double'' decomposition into $(\sn\otimes\sk)$-invariant subspaces
\beq\label{doubledec}
\H=\sum_{\mu,\nu\vdash k}\H_{\mu,\nu},
\eeq
where $\H_{\mu,\nu} = \H^{\rm mult}_\mu \cap \H^{\rm SW}_\nu$.  Denote by $\rho_{\mu,\nu}=\rho^n_{\mu,\nu}$ the representation of $\sn$ in $\H_{\mu,\nu}=\H^n_{\mu,\nu}$. A natural question is to find this representation.

\begin{remark}
{\rm We can restate the classical Schur--Weyl duality as follows: if we start with the commuting actions of $\sn$ and $\sk$ in $\H=(\C^n)^{\otimes k}$ and want to maximize the first factor preserving the commutation property, what we get is the action of $\operatorname{GL}(n,\C) \times \sk$ in $\H$. If we maximize the second factor instead, we get the action of $\sn \times \operatorname{Part}(k)$, where 
$\operatorname{Part}(k)$ is the {\it partition algebra} (see, for example, \cite{RamHalverson}). Thus we can regard the action under consideration also as a restriction of the action of $\sn \times \operatorname{Part}(k)$.
%
}
\end{remark}

Let $\mu=(1^{m_1}2^{m_2}\ldots)\vdash k$, i.\,e.,  $\sum im_i=k$, and denote by $l=\sum m_i$ the length (total number of nonzero parts) of $\mu$. We introduce the following representations labeled by collections of partitions $\La=(\la_1,\la_2,\ldots)$, where $\la_1\vdash m_1, \la_2\vdash m_2, \ldots$:
$$
R_{\La}=\Ind_{{\frak S}_{m_1}\times{\frak S}_{m_2}\times\ldots}^{{\mathfrak S}_l}(\pi_{\la_1}\times\pi_{\la_2}\times\ldots)
$$
and
$$
Q_{\La}=\Ind_{{\frak S}_{m_1}\times{\frak S}_{2m_2}\times\ldots\times{\frak S}_{im_i}\times\ldots}^{{\mathfrak S}_k}(\pi_{\la_1}[\Id_{1}]\times\pi_{\la_2}[\Id_{2}]\times\ldots\times\pi_{\la_i}[\Id_{i}]\times\ldots),
$$
where 
$$
\pi_{\la_i}[\Id_{i}]=\Ind_{{\frak S}_{i}\wr{\frak S}_{m_i}}^{{\frak S}_{im_i}}(\Id_i\wr\pi_{\la_i})
$$ 
is the representation of ${\frak S}_{im_i}$ induced from the representation $\Id_i\wr\pi_{\la_i}$ of the wreath product ${\frak S}_{i}\wr{\frak S}_{m_i}\subset{\frak S}_{im_i}$.

Given $\nu\vdash k$, denote
\beq\label{Pi}
\Pi(\mu,\nu)=\dim\nu\sum_{\la_1\vdash m_1,\la_2\vdash m_2,\ldots}\langle\pi_\nu,Q_{\La}\rangle R_{\La},
\eeq
where $\langle\pi_\nu,Q_{\La}\rangle$ is the multiplicity of the irreducible representation $\pi_\nu$ in $Q_{\La}$. Thus $\Pi(\mu,\nu)$ is a representation of ${\frak S}_l$.
Now the representation $\rho_{\mu,\nu}$, which corresponds to type of tensors~$\mu$ and symmetry type~$\nu$, is essentially a representation induced from $\Pi(\mu,\nu)$.

\begin{theorem}\label{th:main}
Given $\mu,\nu\vdash k$, the representation $\rho_{\mu,\nu}$ of $\sn$ in the space $\H_{\mu,\nu}$ of tensors of type~$\mu$ and symmetry~$\nu$ is given by the formula
\beq\label{main}
\rho_{\mu,\nu} = \Ind_{{\frak S}_l\times{\frak S}_{n-l}}^{\sn}(\Pi(\mu,\nu)\times\Id_{n-l}),
\eeq
where $l$ is the length of $\mu$.
\end{theorem}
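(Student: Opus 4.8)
The plan is to exploit the observation that the symmetry-type subspace $\H^{\rm SW}_\nu$ is nothing but the $\pi_\nu$-isotypic component of $\H$ for the $\sk$-action: by Schur--Weyl duality $\H\cong\bigoplus_\nu\varrho_\nu\otimes\pi_\nu$ as an $\operatorname{GL}(n,\C)\times\sk$-module, so $\H^{\rm SW}_\nu$ is exactly the $\pi_\nu$-isotypic component under $\sk$. Since $\H^{\rm mult}_\mu$ is invariant under both $\sn$ and $\sk$ and these two actions commute, $\H_{\mu,\nu}=\H^{\rm mult}_\mu\cap\H^{\rm SW}_\nu$ is precisely the $\pi_\nu$-isotypic subspace of the $\sk$-module $\H^{\rm mult}_\mu$, and $\rho_{\mu,\nu}$ is the induced $\sn$-action on it. First I would remove the dependence on $n$. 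Writing $[k]=\{1,\ldots,k\}$, let $\H^{\rm mult,[l]}_\mu$ be the span of those basis vectors $e_{i_1\ldots i_k}$ whose index set is exactly $\{1,\ldots,l\}$. This subspace is $\sk$-invariant and invariant under ${\mathfrak S}_l\times{\mathfrak S}_{n-l}\subset\sn$ (with ${\mathfrak S}_{n-l}$ acting trivially, as those indices do not occur), its $\sn$-orbit sweeps out all $l$-subsets of indices, and the stabilizer of $\{1,\ldots,l\}$ is ${\mathfrak S}_l\times{\mathfrak S}_{n-l}$; hence $\H^{\rm mult}_\mu\cong\Ind_{{\mathfrak S}_l\times{\mathfrak S}_{n-l}}^{\sn}\H^{\rm mult,[l]}_\mu$ as $\sn$-modules, $\sk$-equivariantly. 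As induction from ${\mathfrak S}_l\times{\mathfrak S}_{n-l}$ commutes with passing to $\sk$-isotypic components, it suffices to show that the $\pi_\nu$-isotypic subspace of the $\sk$-module $\H^{\rm mult,[l]}_\mu$, regarded as an ${\mathfrak S}_l\times{\mathfrak S}_{n-l}$-module, equals $\Pi(\mu,\nu)\times\Id_{n-l}$.

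The heart of the argument is to decompose the \emph{bimodule} $\H^{\rm mult,[l]}_\mu$ under $\sk\times{\mathfrak S}_l$, where ${\mathfrak S}_l$ permutes the $l$ used indices. Its basis is the set of surjections $[k]\to[l]$ with fibers of type $\mu$, i.e.\ labeled set partitions of $[k]$ of type $\mu$; since $\sk\times{\mathfrak S}_l$ acts transitively on this set, $\H^{\rm mult,[l]}_\mu\cong\Ind_H^{\sk\times{\mathfrak S}_l}\mathbf 1$ for the stabilizer $H$ of a single labeled partition. I would organize the computation by block size: splitting the positions into the Young subgroup $\prod_i{\mathfrak S}_{im_i}\subset\sk$ and the labels into $\prod_i{\mathfrak S}_{m_i}\subset{\mathfrak S}_l$ according to the multiplicity class, one obtains
\beq
\H^{\rm mult,[l]}_\mu\cong\Ind_{(\prod_i{\mathfrak S}_{im_i})\times(\prod_i{\mathfrak S}_{m_i})}^{\sk\times{\mathfrak S}_l}\Bigl(\bigotimes_i V_i\Bigr),
\eeq
where $V_i$ is the $({\mathfrak S}_{im_i}\times{\mathfrak S}_{m_i})$-module spanned by labeled partitions of $im_i$ points into $m_i$ blocks of size $i$ (a dimension count confirms the identification). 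As the induction over the two commuting factors splits through the two Young subgroups, everything reduces to identifying $V_i$.

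To compute $V_i$, note that its basis is again a transitive ${\mathfrak S}_{im_i}\times{\mathfrak S}_{m_i}$-set whose point stabilizer is the graph of the natural projection $q_i\colon{\mathfrak S}_i\wr{\mathfrak S}_{m_i}\to{\mathfrak S}_{m_i}$ sending a wreath element to its induced block permutation. Inducing $\mathbf 1$ from this graph first up to $({\mathfrak S}_i\wr{\mathfrak S}_{m_i})\times{\mathfrak S}_{m_i}$ gives the group algebra $\C[{\mathfrak S}_{m_i}]$ with ${\mathfrak S}_{m_i}$ acting on one side and ${\mathfrak S}_i\wr{\mathfrak S}_{m_i}$ acting on the other through $q_i$; decomposing this two-sided regular representation yields $\bigoplus_{\la_i\vdash m_i}(\pi_{\la_i}\circ q_i)\times\pi_{\la_i}$, and the pullback $\pi_{\la_i}\circ q_i$ is exactly $\Id_i\wr\pi_{\la_i}$. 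Inducing the first factor up to ${\mathfrak S}_{im_i}$ turns it into the plethysm $\pi_{\la_i}[\Id_i]$, so
\beq
V_i\cong\bigoplus_{\la_i\vdash m_i}\pi_{\la_i}[\Id_i]\times\pi_{\la_i}.
\eeq
Substituting back and recognizing the two factored inductions as $Q_\La$ (over the positions) and $R_\La$ (over the labels) gives $\H^{\rm mult,[l]}_\mu\cong\bigoplus_\La Q_\La\times R_\La$ under $\sk\times{\mathfrak S}_l$. Its $\pi_\nu$-isotypic subspace for $\sk$ then carries, as an ${\mathfrak S}_l$-module, the representation $\dim\nu\sum_\La\langle\pi_\nu,Q_\La\rangle R_\La$ (the factor $\dim\nu$ being the dimension of $\pi_\nu$ inside its isotypic component), which is exactly $\Pi(\mu,\nu)$; since ${\mathfrak S}_{n-l}$ acts trivially, this isotypic subspace is $\Pi(\mu,\nu)\times\Id_{n-l}$ as an ${\mathfrak S}_l\times{\mathfrak S}_{n-l}$-module, and the reduction of the first paragraph yields \eqref{main}.

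The main obstacle is the bimodule identification in the third step: seeing that the stabilizer is the graph of $q_i$ and that the pullback $\pi_{\la_i}\circ q_i$ is the wreath representation $\Id_i\wr\pi_{\la_i}$, so that the plethysm $\pi_{\la_i}[\Id_i]$ arises naturally — this is precisely what links the multiplicity-type decomposition to plethysm and thus to the hard computation flagged in the introduction. The remaining ingredients (transitivity of the various actions, the dimension checks, the $\dim\nu$ bookkeeping, and the commutation of induction from ${\mathfrak S}_l\times{\mathfrak S}_{n-l}$ with isotypic projection) are routine.
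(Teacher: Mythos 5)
Your proof is correct and follows essentially the same route as the paper's: both identify $\H^{\rm mult}_\mu$ as induced from a $\sk\times{\mathfrak S}_l$-bimodule built out of the two-sided regular representation of ${\mathfrak S}_{m_i}$ pulled back through the quotient ${\mathfrak S}_i\wr{\mathfrak S}_{m_i}\to{\mathfrak S}_{m_i}$ (the paper's space $\K$ with its commuting $\Pi^{\mbox{\scriptsize left}}$ and $\Pi^{\mbox{\scriptsize right}}$ actions is exactly your $\Ind$ from the graph of $q_i$), then apply the Peter--Weyl decomposition $\C[{\mathfrak S}_{m_i}]=\bigoplus_\la H_\la\otimes H_\la$ to obtain the pieces $Q_\La\times R_\La$ and extract the $\pi_\nu$-isotypic component via Schur--Weyl. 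The only difference is presentational: you treat general $\mu$ directly and make explicit the orbit--stabilizer steps that the paper compresses into ``it is not difficult to see,'' while the paper works out $\mu=(p^q)$ and leaves the general case to the reader.
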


\begin{proof}
For simplicity, first assume that $\mu=(p^q)$, so $k=pq$, $l=q$, and in~\eqref{Pi} we have $\La=\la\vdash q$, $R_\La=\pi_\la$, $Q_\La=\pi_\la[\Id_p]$.

Fix the natural embedding $({\mathfrak S}_p)^q\hookrightarrow\sk$. The normalizer of $({\mathfrak S}_p)^q$ in $\sk$ is exactly the wreath product ${\frak S}_{p}\wr{\frak S}_{q}$. Observe that an element of ${\frak S}_{p}\wr{\frak S}_{q}$ can be identified with a tuple $(g_1,\ldots,g_q,\si)$, where $g_1,\ldots,g_q\in{\mathfrak S}_p$, $\si\in{\mathfrak S}_q$.

Now consider the space
$$
\K=\{f:\sk\to\C[{\mathfrak S}_q]: \forall  g=(g_1,\ldots,g_q,\si)\in {\frak S}_{p}\wr{\frak S}_{q}, f(gh)=\Reg_q^{\mbox{\scriptsize right}}(\si)f(h)\},
$$
where $\Reg_q^{\mbox{\scriptsize right}}$ is the right regular representation of ${\mathfrak S}_q$ in $\C[{\mathfrak S}_q]$. Thus $\K$ is the space of the representation
$$
\Pi^{\mbox{\scriptsize right}}=\Ind_{{\frak S}_{p}\wr{\frak S}_{q}}^{\sk}(\Id_p\wr\Reg_q^{\mbox{\scriptsize right}}).
$$

On the other hand, there is also a representation $\Pi^{\mbox{\scriptsize left}}$ of ${\frak S}_{q}$ in $\K$, given by the formula 
$$
\Pi^{\mbox{\scriptsize left}}(\tau) f(h)=\Reg_q^{\mbox{\scriptsize left}}(\tau)f(h),\qquad\tau\in{\frak S}_{q},
$$
where $\Reg_q^{\mbox{\scriptsize left}}$ is the left regular representation of ${\mathfrak S}_q$ in $\C[{\mathfrak S}_q]$,
and it is not difficult to see that the representation of $\sn$ in $\H_\mu^{(2)}$ is isomorphic to
$$
\Pi^{(2)}_\mu=\Ind_{{\mathfrak S}_q\times{\mathfrak S}_{n-q}}^{\sn}(\Pi^{\mbox{\scriptsize left}}\times\Id_{n-q}).
$$

Now we have the decomposition
\begin{equation}\label{eq:CS_n_decompos}
\C[{\mathfrak S}_q]=\bigoplus_{\la\vdash q} H_\la^{\mbox{\scriptsize left}}\otimes H_\la^{\mbox{\scriptsize right}}=\bigoplus_{\la\vdash q}K_\la,
\end{equation}
where $H_\la^{\mbox{\scriptsize left}}$ and $H_\la^{\mbox{\scriptsize right}}$ are the spaces of the irreducible representation $\pi_\la$ for the left and right regular representations of ${\frak S}_{q}$, respectively, and $K_\la=H_\la^{\mbox{\scriptsize left}}\otimes H_\la^{\mbox{\scriptsize right}}$. It follows that $\K=\sum_{\la\vdash q}\K_\la$, where the subspaces $\K_\la$ of $\K$ consisting of the functions with values in $K_\la$ are $(\Pi^{\mbox{\scriptsize left}}\otimes\Pi^{\mbox{\scriptsize right}})$-invariant, and
$$
(\Pi^{\mbox{\scriptsize left}}\otimes\Pi^{\mbox{\scriptsize right}})|_{\K_\la}=\pi_\la\otimes \pi_\la[\Id_p]=R_\la\otimes Q_\la.
$$

By the Schur--Weyl duality, $\H=\sum_{\nu\vdash k}\H^{\rm SW}_\nu=\sum_{\nu\vdash k}(\varrho_\nu\times\pi_\nu)$. It follows that if $Q_\La=\sum_{\nu\vdash k}d_\nu\pi_\nu$, then the contribution to $\rho_{\mu,\nu}$ coming from $\K_\la$ is equal to
$$
\dim\pi_\nu\cdot d_\nu R_\la=\dim\nu\cdot \langle\pi_\nu, Q_\la\rangle R_\la,
$$
and~\eqref{main} follows.

It is easy to see that the desired result for an arbitrary partition $\mu\vdash k$ can be obtained by similar arguments with obvious modifications.
\end{proof}

\begin{proposition}[upper triangularity]\label{prop:upper-triang}
The representation $\rho_{\mu,\nu}$ is nonzero if and only if $\nu\unrhd\mu$ in the sense of the natural (dominance) ordering on partitions.
\end{proposition}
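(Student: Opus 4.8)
The plan is to sidestep the plethystic expression for $\Pi(\mu,\nu)$ and instead read the statement off the $\sk$-module structure of $\H^{\rm mult}_\mu$, reducing it to the classical Young rule. The first step is to recast the assertion as one about occurrence of an irreducible constituent. Since permuting tensor factors preserves the multiplicity type of the index multiset, $\H^{\rm mult}_\mu$ is $\sk$-invariant; and by Schur--Weyl duality $\H^{\rm SW}_\nu$ is simultaneously the $\varrho_\nu$-isotypic component for $\operatorname{GL}(n,\C)$ and the $\pi_\nu$-isotypic component for $\sk$. For a subrepresentation the isotypic decomposition is inherited, so $\H_{\mu,\nu}=\H^{\rm mult}_\mu\cap\H^{\rm SW}_\nu$ is exactly the $\pi_\nu$-isotypic part of $\H^{\rm mult}_\mu$. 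Hence $\rho_{\mu,\nu}\ne0$ if and only if $\pi_\nu$ occurs in $\H^{\rm mult}_\mu|_{\sk}$. (I work in the range $n\ge l$, the only one in which $\H^{\rm mult}_\mu\ne0$; for $n<l$ all $\rho_{\mu,\nu}$ vanish and the statement should be read as vacuous.)

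Next I would make this restricted module explicit. The group $\sk$ permutes the basis tensors $e_{i_1\ldots i_k}$ of type $\mu$, with orbits indexed by the multisets of type $\mu$ inside $\{1,\ldots,n\}$, and the stabilizer of any one of them is the Young subgroup $\mathfrak{S}_\mu=\prod_j(\mathfrak{S}_j)^{m_j}$ (the permutations fixing each block of equal indices). Therefore
$$
\H^{\rm mult}_\mu\big|_{\sk}\cong c\cdot M^\mu,\qquad M^\mu=\Ind_{\mathfrak{S}_\mu}^{\sk}(\mathbf 1),\qquad \ch M^\mu=h_\mu=\prod_j h_j^{m_j},
$$
where $\mathbf 1$ is the trivial representation and $c>0$ is the number of such multisets; this is the $\sk$-counterpart of the $\sn$-module structure recorded in~\eqref{formsrep}, and the two dimension counts agree. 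Consequently $\pi_\nu$ occurs in $\H^{\rm mult}_\mu|_{\sk}$ if and only if it occurs in $M^\mu$, i.e. if and only if the Kostka number $K_{\nu\mu}=\langle s_\nu,h_\mu\rangle$ is positive.

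Finally I would invoke the Young rule: $K_{\nu\mu}>0$ precisely when $\nu\unrhd\mu$ (see, e.g., \cite{St}), which is exactly the claim. It remains only to confirm that nothing is lost to the Schur--Weyl row constraint: $\nu\unrhd\mu$ forces $l(\nu)\le l(\mu)=l\le n$, so $\varrho_\nu\ne0$ and the component $\H^{\rm SW}_\nu$ is genuinely present. I do not expect a real obstacle along this route --- once the reduction to $M^\mu$ is secured the content is entirely classical --- and the only points demanding care are the identification of the permutation-module structure and the standing hypothesis $n\ge l$. The harder alternative, staying within Theorem~\ref{th:main}, would require computing the support of $\sum_{\la\vdash m_i}s_{\la}[h_i]$ over all admissible $\La$; this is pinned down by the plethysm identity $\sum_{\la\vdash m_i}(\dim\la)\,s_{\la}[h_i]=h_i^{m_i}$ (apply the plethysm by $h_i$ to $p_1^{m_i}=\sum_{\la\vdash m_i}(\dim\la)\,s_\la$) together with the Young rule again, and I would regard that computation, rather than anything in the direct argument, as the genuine difficulty one must confront if one insists on the plethystic formulation.
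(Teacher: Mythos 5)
Your proof is correct and follows essentially the same route as the paper's: both arguments reduce the question to whether $\pi_\nu$ occurs in the permutation module $M^\mu=\Ind_{{\mathfrak S}_\mu}^{\sk}(\mathbf 1)$ realized on the type-$\mu$ basis tensors (the paper's description of $\H^{\rm mult}_\mu$ via tabloids of shape $\mu$ is exactly this identification). The only difference is that you cite Young's rule ($K_{\nu\mu}>0$ iff $\nu\unrhd\mu$) where the paper reproves it inline via Young symmetrizers acting on tabloids.
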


\begin{proof}
Consider the decomposition $\C[\sk]=\sum_{\nu\vdash k}K_\nu$ of the semisimple algebra $\C[\sk]$ into the direct sum of simple ideals
analogous to~\eqref{eq:CS_n_decompos}. 
Then the subspaces $\H^{\rm SW}_\nu$ in~\eqref{eq:H_decomp_form} can be written as $\H^{\rm SW}_\nu=\H\cdot K_\nu$,
and in the same way
\begin{equation}\label{eq:H_2_decomp}
\H_{\mu,\nu}=\H^{\rm mult}_\mu\cdot K_\nu.
\end{equation}
We use the following well-known description of $K_\nu$ in terms of Young symmetrizers $c_T$ (see \cite[Chap.~7, Ex.~9 and~18]{Fulton}):
\beq\label{K_nu}
K_\nu = \sum_{T\in\nu} c_T \C[\sn] = \sum_{T\in\nu} \C[\sn] c_T,
\eeq
where the sum is over all standard tableaux of shape $\nu$.

Now we will describe the subspace $\H^{\rm mult}_\mu$, for any $\mu$, as a space of  tabloids. For any tableau $T$, we denote the corresponding tabloid by $\{T\},$ and the row and column stabilizers by $R(T)$ and $C(T)$. It is easy to see that  $\H^{\rm mult}_\mu$ is isomorphic as a $\sn$-module to the space spanned by the pairs $(\{T\}, f)$ where $\{T\}$ is a tabloid of shape $\mu$ (we take one representative for each class of tabloids that differ only by a permutation of rows) and $f$ is a function from $\{1,2,\dots, n\}$ to itself such that 
$f(v) = f(w)$  if and only if there exists $\si\in R(T)$ such that $v = \si w$.

By~\eqref{eq:H_2_decomp} and~\eqref{K_nu}, we must check when  $\H_{\mu,\nu}   = \H^{\rm mult}_\mu\cdot \sum_{T\in\nu} c_T \C[\sn]=0$. We will show that if the condition $\nu\unrhd\mu$ is not satisfied, then $(\{T'\}, f) \cdot c_T = 0$ for any basis element $(\{T'\}, f)\in \H^{\rm mult}_\mu$. First, it obviously suffices to prove that $\{T'\} \cdot c_T = 0$ for any $\{T'\}$ of shape $\mu$. Second, we recall that $c_T = b_T \cdot a_T$, where
$a_T = \sum_{\si \in R(T)} \si$ and $b_T = \sum_{\si\in C(T)} \sgn(\si) \si$.
Now, by the definition of the dominance order, there exist two numbers that lie in the same row of $T'$ and in the same column of $T$, whence $\{T'\} \cdot b_T = 0$. 

Vice versa, if $\nu\unrhd\mu$, then there exist tabloids $\{T'\}$ such that $\{T'\} \cdot b_T \neq 0$ (for example, we can consider the ``natural filling'' in which the rows are filled successively from left to right and from top to bottom).
\end{proof}

Given $\eta\vdash n$, denote by $a_{\mu\nu}^\eta=\langle\pi_\eta,\rho_{\mu,\nu}\rangle$ the multiplicity of the irreducible representation $\pi_\eta$ of $\sn$ in $\rho_{\mu,\nu}$, so that
\beq\label{rho}
\rho_{\mu,\nu}=\sum_{\eta\vdash n}a_{\mu\nu}^\eta\pi_\eta.
\eeq
Also, denote by $\tilde\eta$ the diagram obtained from~$\eta$ by removing the first row. Conversely, given a diagram $\la=(\la_1,\la_2,\ldots)$ and $n\ge|\la|+\la_1$, denote by $(n-|\la|,\la)=(n-|\la|,\la_1,\la_2,\ldots)$ the diagram with $n$ cells obtained from $\la$ by adding a row of length $n-|\la|$. 

\begin{corollary}[stability]\label{cor:stable}
It follows from~\eqref{main} that $a_{\mu\nu}^\eta=0$ unless $|\tilde\eta|\le l=l(\mu)$. Moreover, for every diagram $\la$ with at most $l$ cells, the coefficient $a_{\mu\nu}^{(n-|\la|,\la)}$ does not depend on $n$ for sufficiently large $n$.
\end{corollary}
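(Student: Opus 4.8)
The plan is to read the multiplicities $a_{\mu\nu}^\eta$ straight off formula~\eqref{main} by means of the Pieri rule. First I would expand the fixed ${\mathfrak S}_l$-representation as $\Pi(\mu,\nu)=\sum_{\alpha\vdash l}b_\alpha\,\pi_\alpha$, where $b_\alpha=\langle\pi_\alpha,\Pi(\mu,\nu)\rangle$; by~\eqref{Pi} this quantity depends only on $\mu,\nu$ and not on $n$. Since $\Id_{n-l}=\pi_{(n-l)}$ is the one-row (trivial) representation, \eqref{main} rewrites as
$$
\rho_{\mu,\nu}=\sum_{\alpha\vdash l}b_\alpha\,\Ind_{{\mathfrak S}_l\times{\mathfrak S}_{n-l}}^{\sn}\bigl(\pi_\alpha\times\pi_{(n-l)}\bigr),
$$
and each summand is exactly the Pieri situation: inducing $\pi_\alpha$ with a single row yields $\sum_\eta\pi_\eta$ over all $\eta\vdash n$ such that $\eta\supseteq\alpha$ and $\eta/\alpha$ is a horizontal strip. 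Hence $a_{\mu\nu}^\eta=\sum_{\alpha}b_\alpha$, the sum running over those $\alpha\vdash l$ for which $\eta/\alpha$ is a horizontal strip.

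For the vanishing bound I would use the interlacing description of a horizontal strip: $\eta/\alpha$ is a horizontal strip precisely when $\eta_1\ge\alpha_1\ge\eta_2\ge\alpha_2\ge\cdots$, so that $\eta_{i+1}\le\alpha_i$ for every $i\ge1$. Summing these inequalities gives $|\tilde\eta|=\sum_{i\ge2}\eta_i\le\sum_{i\ge1}\alpha_i=|\alpha|=l$. Therefore, if $|\tilde\eta|>l$, no admissible $\alpha$ exists and $a_{\mu\nu}^\eta=0$, which is the first assertion.

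For the stability statement I would fix a diagram $\la$ with $|\la|\le l$ and take $\eta=(n-|\la|,\la)$, so that $\tilde\eta=\la$ and $\eta_{i+1}=\la_i$ for $i\ge1$. The interlacing inequalities for $\eta/\alpha$ then split into the single first-row bound $n-|\la|\ge\alpha_1$ and the remaining chain $\alpha_1\ge\la_1\ge\alpha_2\ge\la_2\ge\cdots$, which involves only $\alpha$ and $\la$ and says exactly that $\la\subseteq\alpha$ with $\alpha/\la$ a horizontal strip. Because $\alpha_1\le l$ for every $\alpha\vdash l$, the first-row bound holds automatically once $n\ge l+|\la|$ (which also ensures $(n-|\la|,\la)$ is a genuine partition). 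Consequently, for all such $n$,
$$
a_{\mu\nu}^{(n-|\la|,\la)}=\sum_{\substack{\alpha\vdash l\\ \la\subseteq\alpha,\ \alpha/\la\ \text{horizontal strip}}}b_\alpha,
$$
which is manifestly independent of $n$.

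Since the whole argument is an application of the Pieri rule to~\eqref{main}, I do not expect a genuine obstacle; the only point needing care is to make the threshold in $n$ uniform over the finitely many $\alpha\vdash l$. This is exactly what the crude bound $\alpha_1\le l$ accomplishes: it lets me pick a single $n_0=l+|\la|$ beyond which the first-row constraint becomes vacuous and the surviving index set of admissible $\alpha$ no longer changes, giving the stable value above.
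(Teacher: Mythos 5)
Your proof is correct and follows exactly the route the paper intends: the paper gives no separate argument for the corollary, but its later discussion (rewriting~\eqref{main} as a sum of $\Ind_{{\mathfrak S}_l\times{\mathfrak S}_{n-l}}^{\sn}(\pi_\la\times\Id_{n-l})$ and applying the Pieri rule) is precisely your computation. Your explicit handling of the interlacing inequalities and the uniform threshold $n\ge l+|\la|$ fills in the details the paper leaves to the reader.
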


\begin{definition} 
Given $\mu,\nu\vdash k$ and $\la\vdash l$ with $0\le l\le k$,
denote by $T_{\mu\nu}^\la$ the stable value of $a_{\mu\nu}^{(n-|\la|,\la)}$ for sufficiently large $n$. We call $T_{\mu\nu}^\la$ the {\rm decomposition tensor of tensor representations}. 
\end{definition}

It follows from~\eqref{Pi} that $T_{\mu\nu}^\la$ is an integer multiple of $\dim\nu$, so it is convenient to write the decomposition tensor  $T_{\mu\nu}^\la$ as the {\it symbol}
\begin{equation}\label{eq:struct-tensor}
T_{\mu\nu} = \frac1{\dim\nu}\sum_{|\la| \le k} T_{\mu\nu}^\la\cdot\la.
\end{equation}
Having $T_{\mu\nu}$, we can recover the corresponding stable form for the decomposition of $\rho_{\mu,\nu}$ as
$$
\rho_{\mu,\nu} = \sum_{|\la| \le k} T_{\mu\nu}^\la \cdot \pi_{(n-|\la|,\la)}.
$$

\section{Examples}
We start with two obvious examples, just to illustrate our formulas.

\medskip\noindent{\bf Example 1 (diagonal tensors).} Let $\mu=(k)$ (i.\,e., we consider ``diagonal'' tensors of the form $\sum\al_ie_{ii\ldots i}$). Then $m_k=1$, $m_i=0$ for $i\ne k$, $l=1$, $Q_{(1)}=\pi_{(k)}$, $R_{(1)}=\pi_{(1)}$, and we have
$$
\Pi(\mu,\nu)=\dim\nu\langle\pi_\nu,\pi_{(k)}\rangle\pi_{(1)}=\begin{cases}\pi_{(1)},&\nu=(k),\\0,&\mbox{otherwise},\end{cases}
$$
so that (using Pieri's formula)
$$
\rho_{(k),(k)}=\Ind_{{\frak S}_1\times{\frak S}_{n-1}}^{\sn}(\pi_{(1)}\times\Id_{n-1})=\pi_{(n)}+\pi_{(n-1,1)},
$$
and $\rho_{(k),\nu}=0$ for $\nu\ne(k)$. Thus 
$T_{(k),(k)}=\emptyset+(1)$.

\medskip\noindent{\bf Example 2 (purely off-diagonal tensors).} Let $\mu=(1^k)$ (i.\,e., we consider tensors of the form $\sum\al_{i_1\ldots i_k}e_{i_1\ldots i_k}$ where the sum is over pairwise distinct $i_1,\ldots, i_k$). Then $m_1=k$, $m_i=0$ for $i\ne 1$, $l=k$,  $Q_\la=R_\la=\pi_\la$ for $\la\vdash k$, and we have
$$
\Pi((1^k),\nu)=\dim\nu\sum_{\la_1\vdash k}\langle\pi_\nu,\pi_{\la}\rangle\pi_{\la}=\dim\nu\cdot\pi_\nu,
$$
so that
$$
\rho_{(1^k),\nu}=\dim\nu\cdot\Ind_{{\frak S}_k\times{\frak S}_{n-k}}^{\sn}(\pi_{\nu}\times\Id_{n-k}).
$$
Again using Pieri's formula, we get
\begin{equation}\label{eq:Pieri's_rule}
\rho_{(1^k),\nu} = \pi_{(n-|\nu|,\nu)} +\sum_{\nu / \la\mbox{\scriptsize\ is a horizontal strip},\ \la\neq\nu} \pi_{(n-|\la|,\la)}.
\end{equation}
The sum~\eqref{eq:Pieri's_rule} has the ``highest'' term $\pi_{(n-|\nu|,\nu)}$
(as we will see in Sec.~\ref{sec:infinite}, it is the only term that survives as $n\to\infty$), all other terms being of the form $\pi_{(n-|\la|,\la)}$ with $\la$ strictly contained in $\nu$.

\medskip\noindent{\bf Example 3 (tensors of valence $k=2$).} 
From Examples~1,~2, we have (omitting the zero values and 
using~\eqref{eq:Pieri's_rule})
$$
\rho_{(2),(2)}=\pi_{(n)}+\pi_{(n-1,1)},
$$
$$
\rho_{(1^2),(2)} = \Ind_{{\frak S}_2\times{\frak S}_{n-2}}^{\sn}(\Id_2\times\Id_{n-2}) = \pi_{(n)}+\pi_{(n-1,1)}+\pi_{(n-2,2)},
$$
$$
\rho_{(1^2),(1^2)} = \Ind_{{\frak S}_2\times{\frak S}_{n-2}}^{\sn}(\pi_{(1^2)}\times\Id_{n-2}) = \pi_{(n-1,1)} +\pi_{(n-2,1^2)}.
$$

For clarity, we  describe the corresponding invariant subspaces (see~\cite{N}, and also \cite{Markov}, for details in the symmetric case for any $k$).  In this case, $\H^{\rm mult}_{(1^2)} = H_{(1^2),(2)} \oplus H_{(1^2),(1^2)}$ is the standard decomposition of the space of zero-diagonal matrices $\M=\{(a_{ij})\}$  into symmetric and skew-symmetric parts:  $H_{(1^2),(2)}=\Msym$, $H_{(1^2),(1^2)}=\Mskew$. Then it is easy to see that
$\Msym=\Msym_0\oplus\Msym_1\oplus\Msym_2$, where
\begin{eqnarray*}
\Msym_0&=&\{cE, c\in\C\}, \mbox{ where } E=(1-\de_{ij}),\\
\Msym_1&=&  \{(a_{ij}):  a_{ij}=\al_i+\al_j \mbox{ for } (\al_j)\in\C^n,\;\sum\al_j=0\},\\
\Msym_2&=&\{(a_{ij}):a_{ij}=a_{ji},\;\sum_ja_{ij}=0\mbox{ for every }i\}
\end{eqnarray*}
are the invariant subspaces corresponding to the irreducible representations $\pi_{(n)}$, $\pi_{(n-1,1)}$, and $\pi_{(n-2,2)}$, respectively. 

Similarly, we have $\Mskew=\Mskew_{(1)}\oplus\Mskew_{(1^2)}$, where
\begin{eqnarray*}
\Mskew_{(1)}&=&\{(a_{ij}): a_{ij}=\al_i-\al_j \mbox{ for } (\al_j)\in\mathbb R^n,\sum\al_j=0\}, \\
\Mskew_{(1^2)}&=&\{(a_{ij}): a_{ji}=-a_{ij},\;\sum_ja_{ij}=0 \mbox{ for every }i\}
\end{eqnarray*}
are the invariant subspaces corresponding to the irreducible representations $\pi_{(n-1,1)}$ and $\pi_{(n-2,1^2)}$, respectively. 

\medskip\noindent{\bf Example 4 (tensors of valence $k=3$).} Once again, from Examples~1,~2, we obtain
$$
\rho_{(3),(3)}=\pi_{(n)}+\pi_{(n-1,1)},
$$
$$
\rho_{(1^3),\nu}=\dim\nu\cdot\Ind_{{\frak S}_3\times{\frak S}_{n-3}}^{\sn}(\pi_{\nu}\times\Id_{n-3}),\qquad \nu\vdash 3.
$$
Again we use~\eqref{eq:Pieri's_rule} to rewrite the last formula in the form
\begin{gather*}
\rho_{(1^3),(3)}  		=  \pi_{(n)} + \pi_{(n-1,1)} + \pi_{(n-2,2)} + \pi_{(n-3,3)}, \\
\rho_{(1^3),(21)}  	=  2 \biggl( \pi_{(n-1,1)} + \pi_{(n-2,2)} + \pi_{(n-2,1^2)}+ \pi_{(n-3,2,1)} \biggr), \\
\rho_{(1^3),(1^3)}  		=  \pi_{(n-2,1^2)} + \pi_{(n-3,1^3)}.
\end{gather*}

Now let $\mu=(21)$. Then $m_1=m_2=1$, $m_i=0$ for $i\ne 1,2$, $Q_{(1),(1)}=\Ind_{{\frak S}_2\times{\frak S}_1}^{{\frak S}_3}(\Id_2\times\Id_1)$, $R_{(1),(1)}=\Ind_{{\frak S}_1\times{\frak S}_1}^{{\frak S}_2}(\Id_1\times\Id_1)$,
and we have
$$
\Pi((21),\nu)=\dim\nu\cdot\langle\pi_\nu,\pi_{(3)}+\pi_{(21)}\rangle\Reg_2,
$$
so that
$$
\rho_{(21),\nu}=\begin{cases}\Ind_{{\frak S}_2\times{\frak S}_{n-2}}^{\sn}(\Reg_2\times\Id_{n-2}),&\nu={(3)},\\
2\cdot\Ind_{{\frak S}_2\times{\frak S}_{n-2}}^{\sn}(\Reg_2\times\Id_{n-2}),&\nu={(21)},\\
0,&\nu=(1^3).
\end{cases}
$$

Thus we can summarize the information on the decomposition tensor $T_{\mu\nu}^\la$ for $k=3$ in the following Table~1. 

\begin{table}[!h]\label{table1}
\caption{$T_{\mu\nu}$ for $k=3$.}
{
\bigskip
\begin{tabular}{|c|c|c|c|}
\hline
$\mu\backslash\nu$&$(3)$&$(21)$&$(1^3)$\\\hline
$(3)$&$\emptyset+(1)$&&\\\hline
$(21)$&$\emptyset+2\cdot(1)+(2)+(1^2)$&$\emptyset+2\cdot(1)+(2)+(1^2)$&\\\hline
$(1^3)$&$\emptyset+(1)+(2)+(3)$&$(1)+(2)+(1^2)+(2,1)$ & $(1^2)+(1^3)$\\\hline

\hline
\end{tabular}}
\end{table}

\def\T{{\cal T}}
\def\Tsym{{\cal T}^{\rm sym}}
\def\Tskew{{\cal T}^{\rm skew}}

In this case, the structure of the invariant subspaces is also easy to describe. 
Here $\H^{\rm mult}_{(1^3)} = H_{(1^3),(3)} \oplus H_{(1^3),(2,1)} \oplus H_{(1^3),(1^3)}$ is the decomposition of the space of tensors $\T=\{(a_{ijk})\}$ with pairwise distinct indices into the symmetric part $H_{(1^3),(3)}=\Tsym$, skew-symmetric part $H_{(1^3),(1^3)}=\Tskew$, and the ``symmetry~$(2,1)$'' part $H_{(1^3),(2,1)}={\cal T}^{(2,1)}$ of the form
$$
{\cal T}^{(2,1)} = \{(a_{ijk}): (a_{ijk})\in {\cal T},\;\; a_{ijk}+a_{jki}+a_{kij} = 0 \mbox{ for any }i,j,k\}.
$$
As in the previous example,
$\Tsym=\Tsym_0 \oplus \Tsym_1 \oplus \Tsym_2 \oplus \Tsym_3$, where
\begin{eqnarray*}
\Tsym_0&=&\{(a_{ijk}): a_{ijk}= \al\mbox{ for } \al\in\C\},\\
\Tsym_1&=&  \{(a_{ijk}):  a_{ijk}=\al_i+\al_j+\al_k\mbox{ for } (\al_j)\in\C^n,\;\;\sum\al_j=0\},\\
\Tsym_2&=&\{(a_{ijk}): a_{ijk} = \al_{ij}+\al_{jk}+\al_{ki},\;\; (\al_{ij})\in \Msym_0\}, \\
\Tsym_3&=&\{(a_{ijk}): (a_{ijk})\in {\cal T}^{(3)},\;\;\sum_k a_{ijk}=0\mbox{ for any } i,j\},
\end{eqnarray*}
the invariant subspace $\Tsym_i$ corresponding to the irreducible representation $\pi_{(n-i,i)}$, $0\le i\le 3$. 

In the skew-symmetric case, we have $\Tskew=\Tskew_{(1^2)} \oplus\Tskew_{(1^3)}$, where
\begin{eqnarray*}
\Tskew_{(1^2)} 		&=& \{(a_{ijk}): a_{ijk}=\al_{ij}+\al_{jk}+\al_{ki} \mbox{ for } (\al_{ij})\in \Mskew_0\}, \\
\Tskew_{(1^3)} 	&=& \{(a_{ijk}): (a_{ijk})\in \Tskew,\;\sum_ka_{ijk}=0 \mbox{ for any }i,j\}
\end{eqnarray*}
are the invariant subspaces corresponding to the irreducible representations $\pi_{(n-2,1^2)}$ and $\pi_{(n-3,1^3)}$, respectively. 

The structure of the invariant subspaces for $\T^{(2,1)}$ is slightly more complicated; it will be more natural to use the decompositions~\eqref{eq:H_2_decomp} and~\eqref{K_nu}  and to describe the subspaces corresponding to particular Young symmetrizers. We have
$$
\T^{(2,1)} = \T^{(2,1)}c_{T} \oplus \T^{(2,1)}c_{T'},
$$
where $T, T'$ are the two standart tableau of shape $(2,1)$, so that
$$
c_T = (e + (12))\cdot(e - (13)),\quad c_{T'} = (e + (13))\cdot(e - (12)).
$$
The description of the subspaces is quite similar for $\T^{(2,1)}c_{T}$ and $\T^{(2,1)}c_{T'}$, so we give it for $\T^{(2,1)}c_{T}$:
$$
\T^{(2,1)}c_{T} = \T^{(2,1)}_{(1)} \oplus \T^{(2,1)}_{(2)} \oplus {\cal T}^{(2,1)}_{(1^2)} \oplus \T^{(2,1)}_{(2,1)}, 
$$
where
\begin{eqnarray*}
\T^{(2,1)}_{(1)} 			&=& \{(a_{ijk}):  a_{ijk}=\al_i-\al_k\mbox{ for } (\al_j)\in\C^n,\;\sum\al_j=0\},\\
\T^{(2,1)}_{(2)}			&=& \{(a_{ijk}): a_{ijk} = \al_{ij}-\al_{jk},\; (\al_{ij})\in \Msym_0\}, \\
\T^{(2,1)}_{(1^2)}		&=& \{(a_{ijk}): a_{ijk} = \al_{ij}+\al_{jk}-2\al_{ki},\; (\al_{ij})\in \Mskew_0\}, \\
\T^{(2,1)}_{(2,1)} 	&=& \biggl\{(a_{ijk}): (a_{ijk})\in {\cal M}^{(2,1)}c_T,\; \sum_k a_{ijk}=\sum_k a_{ikj}=0\mbox{ for any } i,j \biggr\},
\end{eqnarray*}
each subspace $\T^{(2,1)}_\la$ corresponding to the irreducible representation $\pi_{(n-|\la|,\la)}$.
In particular, we have a nice description for the invariant subspace corresponding to the primary component $2\pi_{(n-3,2,1)}$:
$$
\biggl\{(a_{ijk}): (a_{ijk})\in \T^{(2,1)},\; \sum_k a_{ijk}=
\sum_k a_{ikj}=0\mbox{ for any } i,j \biggr\},
$$
but for other primary subspaces, the description is more complicated.

As to the subspaces $H_{(2,1),\nu}$, it is easy to check that the subspace of $\H^{\rm mult}_{(2,1)}$ with any fixed order of indices (say, spanned by the basis vectors of the form $e_i \otimes e_i \otimes e_j$ with different $i,j$) is naturally isomorphic to  $\H^{\rm mult}_{(1^2)}$ (for example, in the above case, an isomorphism is given by $\sum_{i\neq j} a_{ij} e_i \otimes e_j \mapsto \sum_{i\neq j} a_{ij} e_i \otimes e_i \otimes e_j$). Thus we can easily deduce the structure of the subspaces of $H_{(2,1),\nu}$ corresponding to irreducible representations from that for $H^{\rm mult}_{(1^2)}$ (see Example~3), both in the symmetric and skew-symmetric cases.

\medskip\noindent{\bf Example 5 (tensors of valence $k=4$).} As in the previous examples, we have
$$
\rho_{(4),(4)}=\pi_{(n)}+\pi_{(n-1,1)},
$$
$$
\rho_{(1^4),\nu}=\dim\nu\cdot\Ind_{{\frak S}_4\times{\frak S}_{n-4}}^{\sn}(\pi_{\nu}\times\Id_{n-4}),\qquad \nu\vdash 4,
$$
$$
\rho_{(31),\nu}=\begin{cases}\Ind_{{\frak S}_2\times{\frak S}_{n-2}}^{\sn}(\Reg_2\times\Id_{n-2}),&\nu={(4)},\\
3\cdot\Ind_{{\frak S}_2\times{\frak S}_{n-2}}^{\sn}(\Reg_2\times\Id_{n-2}),&\nu={(31)},\\
\mbox{otherwise}.
\end{cases}
$$

For $\mu=(2^2)$, we have $m_2=2$, $m_i=0$ for $i\ne 2$, $l=2$ and
$$
\Pi((2^2),\nu)=\dim\nu\cdot\sum_{\la\vdash 2}\langle\pi_\nu,\pi_\la[\Id_2]\rangle\cdot\pi_\la.
$$
It is well known (see, e.g., \cite[Ex.~I.8.6]{Mac}) that
$$
\pi_{(r)}[\Id_2]=\sum_{\tau\vdash 2r,\,\tau\mbox{{\scriptsize\ is even}}}\pi_\tau
$$
and
$$ 
\pi_{(1^r)}[\Id_2]=\sum\pi_\tau'
$$
summed over partitions $\tau$ with Frobenius coordinates $(\al_1-1,\ldots,\al_p-1|\al_1,\ldots,\al_p)$ where $\al_1>\ldots>\al_p>0$ and $\al_1+\ldots+\al_p=r$.
Thus we have $\pi_{(2)}[\Id_2]=\pi_{(4)}+\pi_{(2^2)}$ and $\pi_{(1^2)}[\Id_2]=\pi_{(31)}$, so that we obtain
$$
\Pi((2^2),\nu)=\begin{cases}
\pi_{(2)},&\nu=(4),\\
2\pi_{(2)},&\nu=(2^2),\\
3\pi_{(1^2)},&\nu=(31),\\
0,&\mbox{otherwise},
\end{cases}
$$
and the corresponding formulas for $\rho_{(2^2),\nu}$ follow.

Finally, if $\mu=(21^2)$, then $m_1=2$, $m_2=1$, $m_i=0$ for $i\ne 1,2$, $l=3$; for $\la\vdash2$, we have $R_{\la,(1)}=\Ind_{{\frak S}_2\times{\frak S}_2}^{{\frak S}_4}(\pi_\la\times\Id_2)$ and $Q_{\la,(1)}=\Ind_{{\frak S}_2\times{\frak S}_2}^{{\frak S}_4}(\pi_\la[\Id_2]\times\Id_2)$
 and we have, using known formulas,
$$
\Pi((21^2),\nu)=\begin{cases}
\pi_{(3)}+\pi_{(21)},&\nu=(4),\\
3(\pi_{(3)}+2\pi_{(21)}+\pi_{(1^3)}),&\nu=(31),\\
2(\pi_{(3)}+\pi_{(21)}),&\nu=(2^2),\\
3(\pi_{(21)}+\pi_{(1^3)}),&\nu=(21^2),\\
0,&\nu=(1^4),
\end{cases}
$$
and the corresponding formulas for $\rho_{(21^2),\nu}$ follow.

Now we can summarize the information on the decomposition tensor $T_{\mu\nu}^\la$ for $k=4$ in the following Table~2.

\begin{table}[!h]
\label{table2}
\caption{$T_{\mu\nu}$ for $k=4$.}
{\tiny
\bigskip
\begin{tabular}{|c|c|c|c|c|c|}
\hline
$\mu\backslash\nu$&$(4)$&$(31)$&$(2^2)$&$(21^2)$&$(1^4)$\\\hline
$(4)$&$\emptyset+(1)$&&&&\\\hline
$(31)$&$\emptyset+2\cdot(1)+(2)+(1^2)$&$\emptyset+2\cdot(1)+(2)+(1^2)$&&&\\\hline
$(2^2)$&$\emptyset+(1)+(2)$&$(1)+(1^2)$&$\emptyset+(1)+(2)$&&\\\hline
$(21^2)$&$\emptyset+2\cdot(1)+2\cdot(2)$&$\emptyset+3\cdot(1)+3\cdot(2)+3\cdot(1^2)$&$\emptyset+2\cdot(1)+2\cdot(2)$&$(1)+(2)+2\cdot(1^2)$&\\
&$+(1^2)+(3)+(21)$&$+(3)+2\cdot(21)+(1^3)$&$+(1^2)+(3)+(21)$&$+(21)+(1^3)$&\\\hline
$(1^4)$&$\emptyset+(1)+(2)$&$(1)+(2)+(1^2)$&$(2)+(21)+(2^2)$&$(1^2)+(21)$&$(1^3)+(1^4)$\\
&$+(3)+(4)$&$+(3)+(21)+(31)$&&$+(1^3)+(21^2)$&\\
\hline
\end{tabular}}
\end{table}

\section{The infinite case}\label{sec:infinite}

Now, in the spirit of asymptotic representation theory,
it is natural to consider $n=\infty$. Namely, in this case we have 
$$
\H=(\ell_2)^{\otimes k},
$$
with the $k$th tensor power of the unitary action of the infinite symmetric group $\sinf$ (the inductive limit of the groups $\sn$ with the natural embeddings, i.e., the group of all finitely supported permutations of $\N$) in $\ell_2$. 

It is easy to see that in this case we can
reproduce the arguments and notation from Sec.~\ref{sec:1}  with obvious modifications. In particular, we have decomposition~\eqref{eq:H_decomp_form}, in which we now should regard $\H_\nu^{\operatorname{SW}}$ as the isotypic component of the irreducible representation $\pi_\nu$ of $\sk$, and decomposition~\eqref{decomp2} defined in the same way as in Sec.~\ref{sec:1}. Thus we obtain decomposition~\eqref{doubledec} and denote by $\rho^\infty_{\mu,\nu}$ the representation of $\sinf$ in $\H_{\mu,\nu}$, which corresponds to type of tensors~$\mu$ and symmetry type~$\nu$. It turns out that the structure of $\rho^\infty_{\mu,\nu}$ is the same
as in the finite case (see Theorem~\ref{th:main}), namely, it is essentially a representation induced from the same representation $\Pi(\mu,\nu)$ from~\eqref{Pi}.

\begin{theorem}\label{th:inf}
Given $\mu,\nu\vdash k$,
$$
\rho^\infty_{\mu,\nu}=\Ind_{{\frak S}_l\times\sinf[l]}^{\sinf}(\Pi(\mu,\nu)\times\Id),
$$
where $l=l(\mu)$ is the length of $\mu$, the representation $\Pi(\mu,\nu)$ of ${\frak S}_l$ is given by~\eqref{Pi}, and $\sinf[l]$ is the subgroup in $\sinf$ consisting of the permutations that fix the elements $1,\ldots,l$.
\end{theorem}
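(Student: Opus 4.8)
The plan is to run the proof of Theorem~\ref{th:main} essentially unchanged, the only substantive replacement being that of the finite ``spectator'' group $\mathfrak{S}_{n-l}$ acting on the unused index values by its infinite analogue $\sinf[l]$. As in the finite case I would first treat $\mu=(p^q)$ (so $k=pq$, $l=q$) and recover an arbitrary partition at the end by the same routine bookkeeping. Fix the embedding $(\mathfrak{S}_p)^q\hookrightarrow\sk$ with normalizer $\mathfrak{S}_p\wr\mathfrak{S}_q$, and form the space $\K$ of the representation $\Pi^{\mbox{\scriptsize right}}=\Ind_{\mathfrak{S}_p\wr\mathfrak{S}_q}^{\sk}(\Id_p\wr\Reg_q^{\mbox{\scriptsize right}})$ together with the commuting left action $\Pi^{\mbox{\scriptsize left}}$ of $\mathfrak{S}_q$. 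This entire construction takes place inside the \emph{finite} group $\sk$ and is therefore literally the same when $n=\infty$.

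The one genuinely new ingredient is the identification of the $\sinf$-module $\H^{\rm mult}_\mu$. A basis vector $e_{i_1\ldots i_k}$ of type $\mu$ is determined by a set partition of the $k$ positions into $q$ blocks of size $p$ (the fibers of the index map) together with an injective labeling of the blocks by values in $\N$, and $\sinf$ acts only on the labels. Restricting to the configurations that use exactly the values $\{1,\ldots,q\}$ recovers precisely the local space $\K$, on which $\mathfrak{S}_q$ (permuting these $q$ values) acts by $\Pi^{\mbox{\scriptsize left}}$ while the complementary finitely-supported permutations $\sinf[q]$ (fixing $\{1,\ldots,q\}$) act trivially. Since infinitely many values are available, every configuration is an $\sinf$-translate of one of these, and hence the representation of $\sinf$ in $\H^{\rm mult}_\mu$ is isomorphic to $\Ind_{\mathfrak{S}_q\times\sinf[q]}^{\sinf}(\Pi^{\mbox{\scriptsize left}}\times\Id)$ --- the infinite analogue of~\eqref{formsrep}, with the spectator group $\mathfrak{S}_{n-q}$ replaced by $\sinf[q]$.

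From here the argument is purely formal and coincides with the finite one. The decomposition $\C[\mathfrak{S}_q]=\bigoplus_{\la\vdash q}K_\la$ of~\eqref{eq:CS_n_decompos} splits $\K=\sum_\la\K_\la$ into $(\Pi^{\mbox{\scriptsize left}}\otimes\Pi^{\mbox{\scriptsize right}})$-invariant pieces on which the two commuting actions restrict to $R_\la\otimes Q_\la=\pi_\la\otimes\pi_\la[\Id_p]$. The Schur--Weyl decomposition $\H=\bigoplus_{\nu\vdash k}\H^{\rm SW}_\nu$, now read simply as the splitting of $(\ell_2)^{\otimes k}$ into $\pi_\nu$-isotypic components for the \emph{finite} group $\sk$, supplies the multiplicity factor $\dim\nu=\dim\pi_\nu$ exactly as before, a combinatorial fact that does not involve $n$. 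Writing $Q_\la=\sum_\nu d_\nu\pi_\nu$, the contribution of $\K_\la$ to $\rho^\infty_{\mu,\nu}$ is then $\dim\nu\cdot d_\nu\,R_\la=\dim\nu\cdot\langle\pi_\nu,Q_\la\rangle R_\la$; summing over $\la$ and comparing with~\eqref{Pi} yields the asserted formula.

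I expect the only real obstacle to be the orbit analysis of the second paragraph --- verifying that the configurations built on a fixed $q$-element value set form a single block on which $\sinf$ acts through the set-stabilizer $\mathfrak{S}_q\times\sinf[q]$, so that $\H^{\rm mult}_\mu$ is genuinely induced from $\Pi^{\mbox{\scriptsize left}}\times\Id$. This is the sole point at which the replacement of $\mathfrak{S}_{n-q}$ by $\sinf[q]$ is more than notational, since it relies on having infinitely many spare values; once it is in place, everything else is finite combinatorics of $\sk$ and transfers verbatim. One could additionally note, via Lieberman's theorem, that the resulting induced representations are irreducible, although this is not needed for the formula itself.
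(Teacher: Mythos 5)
Your proof is correct, but it takes a genuinely different route from the paper's. The paper proves Theorem~\ref{th:inf} by a limiting argument: it observes that for fixed $\mu,\nu\vdash k$ the spaces $\H^n_{\mu,\nu}$ and the representations $\rho^n_{\mu,\nu}$ form an inductive chain with $\rho^\infty_{\mu,\nu}=\lim_{n\to\infty}\rho^n_{\mu,\nu}$, and then invokes Theorem~\ref{th:main} together with the standard fact that $\Ind_{{\frak S}_l\times{\frak S}_{n-l}}^{\sn}(\Pi(\mu,\nu)\times\Id_{n-l})$ converges, in the inductive limit, to $\Ind_{{\frak S}_l\times\sinf[l]}^{\sinf}(\Pi(\mu,\nu)\times\Id)$. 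You instead re-run the structural proof of Theorem~\ref{th:main} directly at $n=\infty$, which is legitimate: the construction of $\K$ and the splitting $\C[{\frak S}_q]=\bigoplus_\la K_\la$ live entirely inside the finite groups $\sk$ and ${\frak S}_q$, the Schur--Weyl decomposition is correctly reinterpreted as the $\pi_\nu$-isotypic decomposition of $(\ell_2)^{\otimes k}$ under $\sk$ (exactly as the paper stipulates at the start of Section~\ref{sec:infinite}), and you correctly isolate the one genuinely infinite step, namely the orbit analysis showing $\H^{\rm mult}_\mu\cong\Ind_{{\frak S}_q\times\sinf[q]}^{\sinf}(\Pi^{\mbox{\scriptsize left}}\times\Id)$, which holds because $\sinf$ acts transitively on $q$-element subsets of $\N$ with set-stabilizer ${\frak S}_q\times\sinf[q]$. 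What each approach buys: the paper's argument is two lines and makes the stability phenomenon of Corollary~\ref{cor:stable} transparent, but it quietly relies on compatibility of the embeddings $\H^n_{\mu,\nu}\hookrightarrow\H^{n+1}_{\mu,\nu}$ and on induction commuting with the inductive limit; your argument is longer but self-contained and does not use the finite theorem as a black box. One small point worth making explicit in your write-up is that induction from ${\frak S}_q\times\sinf[q]$ to the infinite discrete group $\sinf$ must be understood as unitary induction (the $\ell^2$-completion of finitely supported sections), which is the convention the paper uses when citing Lieberman's theorem for the irreducibility of $\pi^\infty_\la$.
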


\begin{proof}
It is easy to see that for fixed $k\in\N$ and $\mu,\nu\vdash k$, the representations $\rho_{\mu,\nu}^{n}$ of $\sn$ in $\H_{\mu,\nu}^{n}$ form an inductive chain and $\rho^\infty_{\mu,\nu}=\lim_{n\to\infty}\rho_{\mu,\nu}^{n}$. Now the claim follows from Theorem~\ref{th:main} and the properties of induced representations.
\end{proof}

Let 
\beq\label{Pi_irr}
\Pi(\mu,\nu)=\bigoplus_{\la\vdash l} d^\la_{\mu,\nu} \cdot\pi_\la
\eeq
be the decomposition of $\Pi(\mu,\nu)$ into irreducible representations of ${\frak S}_l$. Then
$$
\rho^\infty_{\mu,\nu} = \bigoplus_{\la\vdash l} d^\la_{\mu,\nu} \cdot \Ind_{{\frak S}_l\times\sinf[l]}^{\sinf}(\pi_\la\times\Id)=
\bigoplus_{\la\vdash l}d^\la_{\mu,\nu}\cdot\pi^\infty_\la,
$$
where, in contrast to the case of finite $n$, the representation 
$$ 
\pi^\infty_\la = \Ind_{{\frak S}_l\times\sinf[l]}^{\sinf}(\pi_\la\times\Id)
$$
of $\sinf$ is irreducible (see \cite{Binder}, and also \cite{PAMQ}).
Thus in the infinite case the  decomposition tensor  is much simpler than in the finite case:
\begin{equation}\label{eq:struct-tensor_infty}
T^{\infty,\la}_{\mu\nu} = d^\la_{\mu,\nu}.
\end{equation}
While in the case of finite $n$, to obtain the coefficients $T_{\mu\nu}^\eta$ of the decomposition tensor, one should rewrite~\eqref{main} as
$$
\rho_{\mu,\nu} = \Ind_{{\frak S}_l\times{\frak S}_{n-l}}^{\sn}\Biggl(\biggl(\bigoplus_{\la\vdash l} d^\la_{\mu,\nu} \cdot\pi_\la \biggr) \times \Id_{n-l} \Biggr) = 
\bigoplus_{\la\vdash l} d^\la_{\mu,\nu} \cdot \Ind_{{\frak S}_l\times{\frak S}_{n-l}}^{\sn} \Biggl( \pi_\la \times \Id_{n-l} \Biggr), 
$$
and then use the Pieri rule to decompose the induced representations on the right-hand side into irreducible representations.



For tensors of type $\mu = (1^k)$,  in the infinite case we can give the following complete description.
We consider the subspace $\Toff_k = \H^{\rm mult}_{(1^k)}$ of purely off-diagonal tensors of valence $k$, and
we have the following decomposition of the corresponding representation $\Poff_k$ into irreducible components for the action of 
$\sinf \times \sk$:
\begin{equation}\label{eq:sinf_diff-indices_decompos}
\Poff_k = \sum_{\nu\vdash k} \pi^\infty_\nu \otimes \pi_\nu.
\end{equation}
Denote the corresponding irreducible subspaces by $\T^\nu$, so that
$
\Toff_k = \bigoplus_{\nu\vdash k} \T^\nu.
$
Thus the algebra generated by the operators of $\rho^\infty_{(1^k),\nu}$ is always a type~I factor and
$$
\rho^\infty_{(1^k),\nu} = \dim \nu \cdot \pi^\infty_\nu.
$$

\begin{remark}\label{rem:Schur-Weyl_diff-indices}
{\rm We see from~\eqref{eq:sinf_diff-indices_decompos} that on $\Toff_k$ we have an analog of the Schur--Weyl duality: the actions of $\sinf$ and $\sk$ commute and generate the commutants of each other. In particular, the commutant of the primary component $\dim \nu \cdot \rho^\infty_\nu$ coincides with the simple ideal $K_\nu$ (see the beginning of the proof of Proposition~\ref{prop:upper-triang}). In other words, we can characterize the subspaces $\T^\nu$ as follows:
$$
\T^\nu = \{ T \in \Toff_k : T\cdot (1-P_\nu) = 0 \},
$$
where $P_\nu \in \C[\sk]$ is the orthogonal projection onto $K_\nu$.
}
\end{remark}

%
%
\medskip\noindent{\bf Example 6.}
Let us consider the first nontrivial case, tensors of valence~$3$. For $k=3$, we have
\begin{gather*}
\Poff_3{\big|}_{\sinf} = \pi^\infty_{(3)} + 2\pi^\infty_{(2,1)}+ \pi^\infty_{(1^3)}, \\
\Toff_3 = \Tsym \oplus \T^{(2,1)} \oplus \Tskew.
\end{gather*}
The subspaces of symmetric tensors $T^{(3)}=\Tsym$ and skew-symmetric tensors $T^{(1^3)} = \Tskew$ are irreducible,
and the ``symmetry~$(2,1)$'' part $\T^{(2,1)}$, which has  the form
$$
{\cal T}^{(2,1)} = \{(a_{ijk}): (a_{ijk})\in \Toff_3,\;\; a_{ijk}+a_{jki}+a_{kij} = 0 \mbox{ for any }i,j,k\},
$$
corresponds to the primary component $2 \pi^\infty_{(2,1)}$ with the commutant equal to~$K_{(2,1)}$.

Comparing this with the case of finite $n$, we see that in the infinite case the ``highest'' subspaces $\Tsym_{(3)}$, $\T^{(2,1)}_{(2,1)}$, and~$\Tskew_{(1^3)}$ are dense in $\Tsym$, $\T^{(2,1)}$,
and~$\Tskew$, respectively. 

\begin{remark}
{\rm The action in $\H$ of the infinite symmetric group $\sinf$ can be extended to an action of the complete symmetric group ${\frak S}^\infty$, which is the group of all permutations of $\N$. Here the corresponding representation $\bar\rho_{\mu,\nu}$ is given by
$$
\bar\rho_{\mu,\nu}=\bigoplus_{\la\vdash l}d^\la_{\mu,\nu}\cdot\bar\pi^\infty_\la,
$$
where $d^\la_{\mu,\nu}$ are from~\eqref{Pi_irr} and 
$$
\bar\pi^\infty_\la=\Ind_{{\frak S}_l\times{\frak S}^\infty[l]}^{{\frak S}^\infty}(\pi_\la\times\Id)
$$
(with ${\frak S}^\infty[l]$ being the subgroup in ${\frak S}^\infty$ consisting of the permutations that fix the elements $1,\ldots,l$) are irreducible representations of ${\frak S}^\infty$ by the well-known Lieberman theorem \cite{Lib}.}
\end{remark}

\section{Symmetric functions formulation}\label{sec:symf}
Denote by $\chi_{\mu,\nu}$ the character of $\rho_{\mu,\nu}$, and let $\psi_{\mu,\nu}=\ch\chi_{\mu,\nu}$ be its image under the characteristic map. Since induction from Young subgroups correspond to multiplication of Schur functions and induction from wreath products correspond to plethysm, we obtain the following.  

\begin{corollary}
In the above notation,
\beq\label{insymf}
\psi_{\mu,\nu}=\dim\nu\sum_{\la_1\vdash m_1,\la_2\vdash m_2,\ldots}\langle s_\nu,s_{\la_1}[h_{1}]s_{\la_2}[h_{2}]\ldots\rangle s_{\la_1}s_{\la_2}\ldots \cdot h_{n-l},
\eeq
where $s_\la$ are Schur functions, $h_\mu$ are complete symmetric functions, and $f[g]$ denotes the plethysm of symmetric functions.
\end{corollary}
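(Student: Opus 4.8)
The plan is to apply the Frobenius characteristic map $\ch$ directly to formula~\eqref{main} of Theorem~\ref{th:main}, invoking three standard properties of $\ch$: it sends $\pi_\la$ to the Schur function $s_\la$ and the trivial representation $\Id_i$ to the complete symmetric function $h_i$; it turns the induction product $\Ind_{{\frak S}_a\times{\frak S}_b}^{{\frak S}_{a+b}}$ into ordinary multiplication of symmetric functions; and it is an isometry carrying the character inner product to the Hall inner product, so that a multiplicity $\langle\pi_\nu,\,\cdot\,\rangle$ becomes a pairing $\langle s_\nu,\,\cdot\,\rangle$.

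First I would compute the characteristic of $\Pi(\mu,\nu)$ from~\eqref{Pi} by treating its building blocks $R_\La$ and $Q_\La$ separately. By multiplicativity of $\ch$ under the induction product, $\ch(R_\La)=s_{\la_1}s_{\la_2}\ldots$. For $Q_\La$, the essential input is the plethystic identity $\ch(\pi_{\la_i}[\Id_i])=s_{\la_i}[h_i]$, which realizes plethysm with $h_i$ as induction from the wreath product ${\frak S}_i\wr{\frak S}_{m_i}$ with trivial inner action; multiplicativity then gives $\ch(Q_\La)=s_{\la_1}[h_1]s_{\la_2}[h_2]\ldots$. By the isometry property, $\langle\pi_\nu,Q_\La\rangle=\langle s_\nu,s_{\la_1}[h_1]s_{\la_2}[h_2]\ldots\rangle$, so that
$$
\ch\big(\Pi(\mu,\nu)\big)=\dim\nu\sum_{\la_1\vdash m_1,\la_2\vdash m_2,\ldots}\langle s_\nu,s_{\la_1}[h_1]s_{\la_2}[h_2]\ldots\rangle\, s_{\la_1}s_{\la_2}\ldots.
$$

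Finally I would apply $\ch$ to the outer induction in~\eqref{main}. Since $\ch(\Id_{n-l})=h_{n-l}$ and induction from ${\frak S}_l\times{\frak S}_{n-l}$ again corresponds to multiplication, this yields $\psi_{\mu,\nu}=\ch(\Pi(\mu,\nu))\cdot h_{n-l}$, which is precisely~\eqref{insymf}.

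The only nonroutine ingredient is the identity $\ch(\pi_\la[\Id_i])=s_\la[h_i]$, expressing the characteristic of a wreath-product-induced representation as a plethysm; this is the classical correspondence (see, e.g., \cite[Ch.~I, \S8]{Mac}), and everything else is a mechanical transcription of Theorem~\ref{th:main} through $\ch$. I expect the main point requiring care to be purely bookkeeping: matching each $\la_i\vdash m_i$ with the block ${\frak S}_{im_i}$ so that $s_{\la_i}[h_i]$ carries the correct degree $im_i$ and the total degrees $\sum im_i=k$ and $(n-l)+l=n$ are respected throughout.
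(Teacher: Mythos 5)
Your proposal is correct and follows exactly the paper's own (very terse) justification: the paper derives \eqref{insymf} from Theorem~\ref{th:main} by noting that induction from Young subgroups corresponds to multiplication of Schur functions and induction from wreath products corresponds to plethysm, which is precisely the transcription through $\ch$ that you carry out in detail. Your identification $\ch(\pi_{\la_i}[\Id_i])=s_{\la_i}[h_i]$ and the use of the isometry property for the multiplicity $\langle\pi_\nu,Q_\La\rangle$ are the intended (and standard) ingredients.
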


\begin{corollary}
For $\mu=(1^{m_1}2^{m_2}\ldots)\vdash k$, we have
\beq
\sum_{\la_1\vdash m_1,\la_2\vdash m_2,\ldots}\langle h_1^k,s_{\la_1}[h_{1}] s_{\la_2}[h_{2}]\ldots\rangle s_{\la_1}s_{\la_2}\ldots =\frac{k!}{\prod_{i}(i!)^{m_i}m_i!}h_1^{\sum m_i}.
\eeq
\end{corollary}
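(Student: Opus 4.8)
The plan is to recognize the left-hand side as the total, summed over all symmetry types $\nu$, of the characteristics of the representations $\Pi(\mu,\nu)$ from~\eqref{Pi}, and then to evaluate this total by a dimension count. The starting observation is the classical expansion $h_1^k=\sum_{\nu\vdash k}(\dim\nu)\,s_\nu$, obtained by iterating Pieri's rule from the empty diagram, so that the coefficient of $s_\nu$ is the number of standard tableaux $\dim\nu$. Writing $\ch Q_\La=s_{\la_1}[h_1]s_{\la_2}[h_2]\cdots$ and $\ch R_\La=s_{\la_1}s_{\la_2}\cdots$, bilinearity of the Hall inner product gives $\langle h_1^k,\ch Q_\La\rangle=\sum_{\nu}(\dim\nu)\langle s_\nu,\ch Q_\La\rangle$, and since $\langle\pi_\nu,Q_\La\rangle=\langle s_\nu,\ch Q_\La\rangle$, the left-hand side of the claimed identity equals $\sum_{\nu\vdash k}\ch\Pi(\mu,\nu)$, i.e.\ the sum over $\nu$ of the summands appearing in the previous corollary with the trailing factor $h_{n-l}$ removed.

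The key simplification is that pairing with $h_1^k$ merely computes a dimension: since $\langle h_1^k,s_\nu\rangle=\dim\nu$, for any representation $V$ of $\sk$ with characteristic $\ch V$ we have $\langle h_1^k,\ch V\rangle=\dim V$. In particular $\langle h_1^k,\ch Q_\La\rangle=\dim Q_\La$, so the left-hand side becomes $\sum_\La(\dim Q_\La)\,s_{\la_1}s_{\la_2}\cdots$. It then remains to compute $\dim Q_\La$. Here I would use that $Q_\La$ is induced from the product subgroup $\prod_i\mathfrak{S}_{im_i}\subset\sk$, and that each factor $\pi_{\la_i}[\Id_i]=\Ind_{\mathfrak{S}_i\wr\mathfrak{S}_{m_i}}^{\mathfrak{S}_{im_i}}(\Id_i\wr\pi_{\la_i})$ has dimension $\frac{(im_i)!}{(i!)^{m_i}m_i!}\dim\la_i$, since $|\mathfrak{S}_i\wr\mathfrak{S}_{m_i}|=(i!)^{m_i}m_i!$ and $\Id_i\wr\pi_{\la_i}$ has dimension $\dim\la_i$. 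Multiplying by the index $k!/\prod_i(im_i)!$ of the product subgroup, the factors $(im_i)!$ cancel and
\[
\dim Q_\La=\frac{k!}{\prod_i(i!)^{m_i}m_i!}\prod_i\dim\la_i.
\]

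Finally I would pull the $\La$-independent constant $c=k!/\prod_i(i!)^{m_i}m_i!$ out of the sum and factor the remainder over $i$:
\[
\sum_\La\Big(\prod_i\dim\la_i\Big)s_{\la_1}s_{\la_2}\cdots=\prod_i\Big(\sum_{\la_i\vdash m_i}(\dim\la_i)\,s_{\la_i}\Big)=\prod_i h_1^{m_i}=h_1^{\sum_i m_i},
\]
applying the expansion $\sum_{\la\vdash m}(\dim\la)s_\la=h_1^m$ to each factor; since $\sum_i m_i=l$ the left-hand side equals $c\,h_1^{\sum m_i}$, which is the claimed right-hand side. The only genuine work is the bookkeeping in the dimension formula for $Q_\La$ — correctly tracking the wreath-product order $(i!)^{m_i}m_i!$ against the index of the product subgroup — while the two conceptual ingredients, that pairing with $h_1^k$ extracts a dimension and that the sum over $\La$ factors into an independent product over $i$, make the rest purely formal.
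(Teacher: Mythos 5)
Your proof is correct, but it takes a different route from the paper's. The paper's proof is a one-liner: sum formula~\eqref{insymf} over $\nu\vdash k$ (which collapses, exactly as in your first step, via $h_1^k=\sum_\nu\dim\nu\cdot s_\nu$), and then identify the total $\sum_\nu\psi_{\mu,\nu}$ with the characteristic of the representation of $\sn$ on all of $\H^{\rm mult}_\mu$, which was already recorded in~\eqref{formsrep} as $\frac{k!}{\prod_i(i!)^{m_i}m_i!}\Ind_{{\mathfrak S}_l\times{\mathfrak S}_{n-l}}^{\sn}(\Reg_l\times\Id_{n-l})$, i.e.\ has characteristic $\frac{k!}{\prod_i(i!)^{m_i}m_i!}h_1^l\,h_{n-l}$; cancelling the common factor $h_{n-l}$ gives the claim. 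You instead avoid~\eqref{formsrep} entirely: after the same collapse of the $\nu$-sum you evaluate $\langle h_1^k,\ch Q_\La\rangle=\dim Q_\La$ by the index computation for the induced representation (your bookkeeping $\dim Q_\La=\frac{k!}{\prod_i(i!)^{m_i}m_i!}\prod_i\dim\la_i$ is right), and then resum over $\La$ using $\sum_{\la\vdash m}\dim\la\cdot s_\la=h_1^m$ factor by factor. What your version buys is self-containedness: it derives the right-hand side purely from the definitions of $Q_\La$ and $R_\La$, and in effect gives an independent rederivation of the characteristic of~\eqref{formsrep} from Theorem~\ref{th:main}, so it doubles as a consistency check between the two formulas rather than assuming one of them. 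The paper's version is shorter precisely because it leans on~\eqref{formsrep}.
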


\begin{proof}
Take the sum of~\eqref{insymf} over $\nu\vdash k$ and use~\eqref{formsrep}.
\end{proof}

Now let us denote by $\Xi_{n,k}$  the characteristics of the action of $\sn$ in the whole space $(\C^n)^{\otimes k}$ and consider the generating function
\beq\label{F}
F=\sum_{n=0}^\infty\sum_{k=0}^\infty\frac1{k!}\Xi_{n,k}.
\eeq

On the one hand, using decomposition~\eqref{decomp2} and formula~\eqref{formsrep}, we have 
\bea
F&=&\sum_{n=0}^\infty\sum_{k=0}^\infty\frac1{k!}\sum_{\mu=(1^{m_1}2^{m_2}\ldots)\vdash k}\frac{k!}{\prod_i(i!)^{m_i}m_i!}\cdot h_1^{\sum m_i}h_{n-\sum m_i}\\
&=&\left(\sum_{n=0}^\infty h_n\right)\sum_{m_1,m_2,\ldots\ge0}\frac{1}{\prod_i(i!)^{m_i}m_i!}\cdot h_1^{\sum m_i}=
h\cdot\prod_{i=1}^\infty\sum_{m_i=0}^\infty\frac{(h_1^{m_i}/i!)}{m_i!}\\&=&
h\cdot \prod_{i=1}^\infty e^{h_1/i!}=h\cdot e^{h_1(e-1)},
\eea
where we have denoted $h=h_0+h_1+h_2+\ldots$.

On the other hand, by the Schur--Weyl duality we have
$$
\Xi_{n,k}=\sum_{\la\vdash k}\dim\la\cdot\ch\xi_\la^{(n)},
$$
where $\xi_\la^{(n)}$ is the representation of $\sn$ obtained by restricting to $\sn\subset \operatorname{GL}(n,\C)$  of the irreducible representation of $\operatorname{GL}(n,\C)$ with signature $\la$. By a  formula proved in \cite{SchThib} (see also \cite[Ex.~7.74]{St}), we have
$$
\ch\xi_\la^{(n)}=\sum_{\nu\vdash n}\langle s_\la,s_\nu[h]\rangle s_\nu,\quad
\mbox{whence}\quad \Xi_{n,k}=\sum_{\nu\vdash n}\langle h_1^k,s_\nu[h]\rangle s_\nu.
$$
Thus
\bea
F=\sum_{n=0}^\infty\sum_{k=0}^\infty\frac1{k!}\sum_{\nu\vdash n}\langle h_1^k,s_\nu[h]\rangle s_\nu=
\sum_{\nu}\langle e^{h_1},s_\nu[h]\rangle s_\nu,
\eea
where the last sum ranges over all partitions $\nu$ of nonnegative integers. So, the two decompositions~\eqref{eq:H_decomp_form} and~\eqref{decomp2} of the space $(\C^n)^{\otimes k}$ into invariant subspaces of $\sn\otimes\sk$ correspond to the following identity for symmetric functions:
\beq\label{id1}
h\cdot e^{h_1(e-1)}=\sum_{\nu}\langle e^{h_1},s_\nu[h]\rangle s_\nu.
\eeq

Now let us use the Cauchy identity to write the right-hand side of~\eqref{id1} as
$$
\bigg\langle e^{h_1(y)},\sum_{\nu} s_\nu[h](y) s_\nu(x)\bigg\rangle=
\bigg\langle e^{h_1(y)},\exp\bigg(\sum_{n=1}^\infty\frac{p_n(x)p_n[h](y)}{n}\bigg)\bigg\rangle.
$$
By the properties of plethysm, we have $p_n[h]=h[p_n]=\sum h_j[p_n]=\sum p_n[h_j]$, where $p_n[h_0]=1$,
so that the exponential factor in the right-hand side equals
\bea
\exp\bigg(\sum_{j=0}^\infty\sum_{n=1}^\infty\frac{p_n(x)p_n[h_j](y)}{n}\bigg)=
\prod_{j=0}^\infty\exp\bigg(\sum_{n=1}^\infty\frac{p_n(x)p_n[h_j](y)}{n}\bigg)\\
=h(x)\cdot\prod_{j=1}^\infty\sum_{\la_j} s_{\la_j}(x)s_{\la_j}[h_j](y)
=h(x)\cdot\sum_{\la_1,\la_2,\ldots}\prod_{j=1}^\infty s_{\la_j}[h_j](y)\prod_{j=0}^\infty s_{\la_j}(x).
\eea
Thus we have
$$
F=h\cdot\sum_{\la_1,\la_2,\ldots}\bigg\langle e^{h_1},\prod_{j=1}^\infty s_{\la_j}[h_j]\bigg\rangle \prod_{j=1}^\infty s_{\la_j}.
$$
Since
$$
e^{h_1}=\sum_{k=0}^\infty \frac{h_1^k}{k!}=\sum_{k=0}^\infty \frac{1}{k!}\sum_{\nu\vdash k}\dim\nu\cdot s_\nu,
$$
we see, keeping in mind~\eqref{F}, that 
\bea
\Xi_{n,k}=\sum_{\nu\vdash k}\dim\nu\cdot \sum_{\begin{smallmatrix}
\la_1,\la_2,\ldots:\\\sum j|\la_j|=k\end{smallmatrix}}\bigg\langle s_\nu,\prod_{j=1}^\infty s_{\la_j}[h_j]\bigg\rangle \prod_{j=1}^\infty s_{\la_j}\cdot h_{n-\sum|\la_j|}\\
=\sum_{\nu\vdash k}\sum_{\mu=(j^{m_j})\vdash k}\sum_{\la_1\vdash m_1,\la_2\vdash m_2,\ldots}\dim\nu\cdot\bigg\langle s_\nu,\prod_{j=1}^\infty s_{\la_j}[h_j]\bigg\rangle \prod_{j=1}^\infty s_{\la_j}\cdot h_{n-\sum|\la_j|},
\eea
and, comparing with~\eqref{insymf}, we see that this is exactly the decomposition corresponding to~\eqref{doubledec}.

\end{document}